\documentclass{article}
\usepackage{amsmath,amssymb}
\usepackage{amsthm}

\newtheorem{dfn}{Definition}[section]
\newtheorem{lem}{Lemma}[section]
\newtheorem{thm}{Theorem}[section]

\newtheorem{rmk}{Remark}[section]

\title{A Lower Bound on WAFOM}
\author{Takehito Yoshiki
\footnote{Tokyo 153-8914  Graduate School of Mathematical Sciences}
\footnote{Email: yosiki@ms.u-tokyo.ac.jp}
\footnote{This work was supported by the Program for Leading Graduate Schools, MEXT, Japan.}
}

\begin{document}
\maketitle
\begin{abstract}
We give a lower bound on Walsh figure of merit (WAFOM), which
is a parameter to estimate 
the integration error for quasi-Monte Carlo (QMC) integration
by a point set called a digital net. 
This lower bound is optimal because
the existence of point sets attaining the order was proved
in [K. Suzuki, An explicit construction of point sets with
large minimum Dick weight, Journal of Complexity 30, (2014), 347-354].
\end{abstract}

\section{Introduction}

We explain the relation between quasi-Monte Carlo (QMC) integration
 and the Walsh figure of merit (WAFOM) (see \cite{1} for details).
QMC integration is one of the methods for numerical integration
 (see \cite{4}, \cite{5} and \cite{8} for details). 
Let $Q$ be a point set in the $s$-dimensional cube $[0,1)^s$ 
with finite cardinality $\# (Q)=N$, and $f:[0,1)^s \rightarrow \mathbb{R}$ be a Riemann integrable function. 
The QMC integration by $Q$ is the approximation of $I(f):=\int_{[0,1)^s}f(x)dx$
by the average $I_{Q}(f):=\frac{1}{\#(Q)}\sum_{x \in Q}f(x)$. 

WAFOM bounds the error of QMC integration for a certain class of functions by a point set $P$
called a digital net, which is
defined by the following identification (see \cite{1} and \cite{5} for details): 
Let ${\mathcal P}$ be a subspace of 
$s \times n$ matrices over the finite field $\mathbb{F}_2$ of order two.
We define the function 
$\varphi :{\mathcal P} \ni X=(x_{i,j}) \mapsto 
x=(\sum_{j=1}^{n}x_{i,j} \cdot 2^{-j})_{i=1}^{s}\in\mathbb{R}^s,$
 where $x_{i,j}$ is considered to be 0 or 1 in $\mathbb{Z}$ and the sum is taken in ${\mathbb R}$. 
The digital net $P$ in $[0,1)^s$ is defined by $\varphi ( {\mathcal P})$.
We identify the digital net $P$ with a linear space ${\mathcal P}$.
If ${\mathcal P}$ is an $m$-dimensional space, the cardinality of $P$ is $2^m$.

Let $f$ be a function whose mixed partial
derivatives up to order $\alpha\geq 1$ in each variable are square integrable (see \cite[3]{3} for details). 
We say that such a function $f$ is an $\alpha$-smooth function or the smoothness of a function $f$ is $\alpha$ here.
By using `$n$-digit discretization $f_n$' (see \cite{1} for details), we approximate $I(f)$ by 
$I_{P}(f_n):=\frac{1}{\#(P)}\sum_{x \in P} f_n(x)$ for an $n$-smooth function $f$, 
that is, we can evaluate the integration error 
by the following Koksma-Hlawka type inequality of WAFOM: 
\[
\left| I(f)-I_{P}(f_n) \right|
\leq 
C_{s,n}||f||_n \times {\rm WAFOM}(P),
\]
where $||f||_n$
is the norm of $f$ defined in \cite{3}
and $C_{s,n}$ is a constant independent of $f$ and $P$. 
If the difference between $I_{P}(f_n)$ and $I_{P}(f)$ is 
negligibly small, we see that
$\left| I(f)-I_{P}(f) \right|
\leq 
C_{s,n}||f||_n \times {\rm WAFOM}(P)
$ approximately holds (see \cite{1} for details).
In \cite{2}, we proved that there is a digital net 
$P$ of size $2^m$ with ${\rm WAFOM}(P)<2^{-Cm^2/s}$ for sufficiently large $m$
 by a probabilistic argument. (Suzuki \cite{6} gave a constructive proof.)
In this paper, we prove that
$\mathrm{WAFOM}(P) > 2^{-C'm^2/s}$ holds for large $m$ and any digital net $P$ with $\# (P)=2^m$
(see Theorem \ref{low} for a precise statement, which 
is formulated for a linear subspace ${\mathcal P}$, instead of a digital net $P$).
Thus this order is optimal.

This paper is organized as follows: 
We introduce some definitions in Section~2. We prove a lower bound on WAFOM in Section 3. 

\section{Definition and notation}
\label{sec2}
In this section, we introduce WAFOM 
and the minimum weight which will be needed later on.

Let $s$ and $n$ be positive integers. 
${\rm M}_{s,n}(\mathbb{F}_2)$ denotes the set of $s \times n$ matrices over 
the finite field $\mathbb{F}_2$ of order 2. 
We regard ${\rm M}_{s,n}(\mathbb{F}_2)$ as an $sn$-dimensional inner product space under
the inner product 
$A\cdot B=(a_{i,j})\cdot (b_{i,j})=\sum_{i,j}a_{i,j}b_{i,j} \in \mathbb{F}_2$. 

WAFOM is defined using a Dick weight in \cite{1}.

\begin{dfn}
\normalfont
Let $X=(x_{i,j})$ be an element of ${\rm M}_{s,n}(\mathbb{F}_2)$.
The Dick weight of $X$ is defined by 
\begin{align*}
\mu (X):= \sum_{1\leq i\leq s,1\leq j\leq n} j\cdot x_{i,j},
\end{align*}
where we regard $x_{i,j} \in \{ 0, 1 \}$ as the element of $\mathbb{Z}$
and take the sum in $\mathbb{Z}$, not in $\mathbb{F}_2$.
\end{dfn}

\begin{dfn}
\normalfont
Let ${\mathcal P}$ be a subspace of ${\rm M}_{s,n}(\mathbb{F}_2)$. 
WAFOM of ${\mathcal P}$ is defined by 
\begin{align}
{\rm WAFOM}({\mathcal P}):= \sum_{X \in {\mathcal P}^\bot \backslash \{O\}}2^{-\mu(X)},
\label{WF}
\end{align}
where ${\mathcal P}^\bot$ denotes the orthogonal space to ${\mathcal P}$ in ${\rm M}_{s,n}(\mathbb{F}_2)$
and $O$ denotes the zero matrix.
\end{dfn}

In order to estimate a lower bound on WAFOM, we use the minimum weight introduced in \cite{2}.

\begin{dfn}
\normalfont
Let ${\mathcal P}$ be a proper subspace of ${\rm M}_{s,n}(\mathbb{F}_2)$.
 The minimum weight of ${\mathcal P}^\bot$ is defined by
\begin{align}
\label{defdelta}
\delta_{{\mathcal P}^\bot}:=\min_{ X \in {\mathcal P}^\bot \backslash \{ O \}} \mu (X).
\end{align}
\end{dfn}

\section{A lower bound on WAFOM}
Now we state a lower bound on WAFOM.
The theorem is mentioned for a linear subspace identified
with a digital net (see Section 1).

\begin{thm}
\label{low}
Let $n$, $s$ and $m$ be positive integers such that $m<ns$, 
and let $C'$ be an arbitrary real number greater than $1/2$. 
If $m/s\geq (\sqrt{C'+1/16}+3/4)/(C'-1/2)$,
then for any $m$-dimensional subspace ${\mathcal P}$ of ${\rm M}_{s,n}(\mathbb{F}_2)$
we have
\begin{align*}
{\rm WAFOM}({\mathcal P}) \geq 2^{-C'm^2/s}.
\end{align*}
\end{thm}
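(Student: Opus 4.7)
The plan is to reduce the WAFOM lower bound to an upper bound on the minimum Dick weight $\delta_{\mathcal{P}^\perp}$, and then to prove the latter by a weighted Singleton-type argument combined with a greedy choice of coordinate set. Since every summand in the definition \eqref{WF} of WAFOM is positive, one has immediately
\[
\mathrm{WAFOM}(\mathcal{P}) \;\geq\; 2^{-\delta_{\mathcal{P}^\perp}},
\]
so the theorem reduces to showing $\delta_{\mathcal{P}^\perp} \leq C'm^2/s$.

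For this, I would first establish the weighted Singleton-type bound: for any subset $J \subseteq [s]\times[n]$ with $|J| = m+1$,
\[
\delta_{\mathcal{P}^\perp} \;\leq\; \sum_{(i,j)\in J} j.
\]
Indeed, let $\pi \colon {\rm M}_{s,n}(\mathbb{F}_2) \to \mathbb{F}_2^{|J^c|}$ be the projection forgetting the entries indexed by $J$. Restricted to $\mathcal{P}^\perp$, this is a linear map from a space of dimension $sn-m$ into one of dimension $sn - m - 1$, so its kernel has dimension at least $1$. Any nonzero element of the kernel belongs to $\mathcal{P}^\perp$ with support contained in $J$, and hence has Dick weight at most $\sum_{(i,j)\in J} j$.

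Second, minimize $\sum_{(i,j)\in J} j$ over $|J|=m+1$ by filling columns of smallest index greedily: writing $m = sq + r$ with $r \in \{0,1,\ldots,s-1\}$, take $J$ to be all of columns $1,\ldots,q$ together with the first $r+1$ rows of column $q+1$. A direct computation gives
\[
\sum_{(i,j)\in J} j \;=\; \frac{(m+s-r)(m+r+2)}{2s},
\]
and an algebraic expansion shows this quantity is at most $(m+s)(m+2s)/(2s) = m^2/(2s) + 3m/2 + s$ for every admissible $r$.

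Finally, the requirement $(m+s)(m+2s)/(2s) \leq C'm^2/s$ rearranges to the quadratic inequality $(2C'-1)m^2 - 3sm - 2s^2 \geq 0$; the quadratic formula, with discriminant $9s^2 + 8(2C'-1)s^2 = s^2(16C'+1)$, yields the positive root $s(\sqrt{C'+1/16}+3/4)/(C'-1/2)$, which is precisely the hypothesized threshold on $m/s$. Combining all steps gives $\mathrm{WAFOM}(\mathcal{P}) \geq 2^{-C'm^2/s}$. The main obstacle is the algebraic comparison $(m+s-r)(m+r+2) \leq (m+s)(m+2s)$ uniformly in $r \in \{0,1,\ldots,s-1\}$, which bridges the sharper bound produced by the greedy construction and the clean closed-form threshold stated in the hypothesis.
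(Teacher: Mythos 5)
Your proposal is correct and follows essentially the same route as the paper: the reduction $\mathrm{WAFOM}(\mathcal{P})\geq 2^{-\delta_{\mathcal{P}^\perp}}$, a dimension-count showing $\mathcal{P}^\perp$ meets the $(m+1)$-dimensional space of matrices supported on the greedily chosen cheapest coordinates (your projection-kernel ``weighted Singleton'' lemma is just a mildly more general phrasing of the paper's subspace $W$), and the same algebra leading to the quadratic $(2C'-1)m^2-3sm-2s^2\geq 0$ and the stated threshold. The only cosmetic difference is that the paper bounds $q\leq m/s$ and $r+1\leq s$ before completing the square, while you keep the exact sum $(m+s-r)(m+r+2)/(2s)$ and compare it with $(m+s)(m+2s)/(2s)$; both yield the same estimate.
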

\begin{proof}
Let $n,s,m$ and $C'$ be defined as above.
The following inequality immediately results from (\ref{WF}), (\ref{defdelta}) in Section \ref{sec2}:
\begin{align}
{\rm WAFOM}({\mathcal P})=\sum_{X \in {\mathcal P}^\bot \backslash \{ O \}}
2^{-\mu (X)} \geq 2^{-\delta_{{\mathcal P}^\bot}}.
\label{WFdelta}
\end{align}
By an upper bound on $\delta_{{\mathcal P}^\bot}$ in Lemma \ref{lem} (b) below
 and the inequality (\ref{WFdelta}), 
for any $m$-dimensional subspace ${\mathcal P}$ of ${\rm M}_{s,n}(\mathbb{F}_2)$, we have
\begin{align*}
{\rm WAFOM}({\mathcal P})&=
\sum_{X \in {\mathcal P}^\bot \backslash \{ O \}} 2^{-\mu (X)} \geq 2^{-\delta_{{\mathcal P}^\bot}}
\geq 2^{-C' m^2/s}.
\end{align*}
Thus Theorem \ref{low} follows.
\end{proof}

We prove an upper bound on the minimum weight $\delta_{{\mathcal P}^\bot}$
 to complete the proof of Theorem \ref{low}.

\begin{lem}
\label{lem}
Let $n$, $s$ and $m$ be positive integers such that $m<ns$.
Then we have the following statements:

\begin{itemize}
\item[{\rm \bf (a)}]
Let q and r be non-negative integers satisfying $q=(m-r)/s$ and $r<s$. 
Then we obtain
\begin{align*}
\delta_{{\mathcal P}^\bot} \leq \frac{sq(q+1)}{2}+(q+1)(r+1) 
\end{align*}
for any $m$-dimensional subspace ${\mathcal P}$ of ${\rm M}_{s,n}(\mathbb{F}_2)$.

\item[{\rm \bf (b)}]
Let $C'$ be an arbitrary positive real number greater than $1/2$. If 
$m/s\geq (\sqrt{C'+1/16}+3/4)/(C'-1/2)$, then we have 
\begin{align*}
\delta_{{\mathcal P}^\bot} \leq C' m^2/s
\end{align*}
for any $m$-dimensional subspace ${\mathcal P}$ of ${\rm M}_{s,n}(\mathbb{F}_2)$.
\end{itemize}
\end{lem}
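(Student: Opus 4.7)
The plan is to prove (a) by a dimension-counting argument inside $M_{s,n}(\mathbb{F}_2)$ and then derive (b) from (a) by elementary algebra. Part (a) is the substantive step; part (b) reduces to a routine quadratic inequality.

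For (a), I define the linear subspace $V\subseteq M_{s,n}(\mathbb{F}_2)$ consisting of all matrices whose nonzero entries lie only at positions $(i,j)$ with $1\leq i\leq s$, $1\leq j\leq q$, or at positions $(i,q+1)$ with $1\leq i\leq r+1$. The hypothesis $m<sn$ forces $q<n$, hence $q+1\leq n$, and $r<s$ gives $r+1\leq s$, so these positions exist. By construction $\dim V = sq+(r+1) = m+1$. Therefore
\[
\dim(V\cap\mathcal{P}^\bot) \geq \dim V + \dim\mathcal{P}^\bot - sn = (m+1)+(sn-m)-sn = 1,
\]
so $V\cap\mathcal{P}^\bot$ contains a nonzero matrix $X$. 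Since the Dick weight on $V$ is maximized at the all-ones element, any such $X$ satisfies
\[
\mu(X)\leq \sum_{j=1}^{q} s\cdot j + (r+1)(q+1) = \frac{sq(q+1)}{2}+(q+1)(r+1),
\]
which is exactly the bound claimed in (a).

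For (b), I apply (a) and use the crude estimates $sq\leq m$ and $r+1\leq s$ (hence $q+1\leq m/s+1$ and $sq/2 + r+1 \leq m/2 + s$) to obtain
\[
\delta_{\mathcal{P}^\bot} \leq (q+1)\bigl(sq/2 + r+1\bigr) \leq \bigl(m/s+1\bigr)(m/2+s) = \frac{(m+s)(m+2s)}{2s}.
\]
It then suffices to check $(m+s)(m+2s)\leq 2C'm^2$. Setting $\beta=m/s$ this becomes $(2C'-1)\beta^2-3\beta-2\geq 0$; for $C'>1/2$ the inequality holds precisely when $\beta$ is at least the positive root $(3+\sqrt{16C'+1})/(4C'-2)$, which rewrites as $(\sqrt{C'+1/16}+3/4)/(C'-1/2)$---exactly the threshold assumed.

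The main obstacle is engineering $V$ in (a) with dimension exactly $m+1$ while keeping its maximum Dick weight as small as possible. Using only the first $q$ full columns would give dimension $sq=m-r\leq m$, not enough to force a nontrivial intersection with $\mathcal{P}^\bot$; the ``partial'' column of height $r+1$ at position $q+1$ is what pushes the dimension up to $m+1$ while contributing only the minimal additional weight $(q+1)(r+1)$. Once (a) is in hand, (b) is a straightforward application of the quadratic formula.
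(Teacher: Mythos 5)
Your proposal is correct and follows essentially the same route as the paper: your subspace $V$ (the first $q$ full columns together with the top $r+1$ entries of column $q+1$) is exactly the paper's subspace $W$, the dimension count $\dim(V\cap\mathcal{P}^\bot)\geq (m+1)+(sn-m)-sn=1$ and the maximum-weight computation are identical, and part (b) uses the same estimates $q\leq m/s$, $r+1\leq s$ with an equivalent quadratic analysis (you solve in $m/s$, the paper completes the square in $s/m$, yielding the same threshold). Your explicit check that $m<ns$ forces $q+1\leq n$ is a small point the paper leaves implicit.
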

\begin{proof}
{\bf (a)} If there exists a subspace $W$ of ${\rm M}_{s,n}(\mathbb{F}_2)$ such that 
for any $m$-dimensional subspace ${\mathcal P}$ of ${\rm M}_{s,n}(\mathbb{F}_2)$
we have ${\mathcal P}^\bot \cap W \neq \{ O \}$, 
then $\delta_{{\mathcal P}^\bot} \leq \max_{X \in W} \mu(X)$ holds.
Therefore in order to obtain a sharp upper bound on $\delta_{{\mathcal P}^\bot}$, 
we need a subspace $W$ with $\max_{X \in W} \mu(X)$ small.
We can construct $W$ as follows:
\begin{eqnarray*}
W:=
\left \{
X=(x_{i,j}) \in {\rm M}_{s,n}(\mathbb{F}_2) \ \ \middle| \ \ 
x_{i,j}=0
\begin{array}{l}
 \ (i \leq r+1 \ \text{and} \ q+2\leq j ) \\
 \hspace{2cm}\mbox{or} \\
 \ (r+2\leq i \ \text{and} \ q+1\leq j)
\end{array}
\right \} ,
\end{eqnarray*}
that is, $W$ consists of the following type of matrices:
\begin{eqnarray}
X = \left(
    \begin{array}{ccccccc}
      x_{1,1}   & \ldots & x_{1,q}  & x_{1,q+1}  & 0 & \ldots & 0 \\
      \vdots    & \vdots & \vdots   & \vdots     & 0 & \ldots & 0 \\
      x_{r+1,1} & \ldots & x_{r+1,q}& x_{r+1,q+1}& 0 & \ldots & 0 \\
      x_{r+2,1} & \ldots & x_{r+2,q}&   0        & 0 & \ldots & 0 \\
      \vdots    & \vdots & \vdots   & \vdots     & 0 & \ldots & 0 \\
      x_{s,1}   & \ldots & x_{s,q}  &   0        & 0 & \ldots & 0 \\
          \end{array}
  \right) \ \ \ \ \ 
( \ x_{i,j} \in \mathbb{F}_2 \ ).
\label{W}
\end{eqnarray}
The subspace $W$ satisfies
${\mathcal P}^\bot \cap W \neq \{ O \} $
for any $m$-dimensional subspace ${\mathcal P}$ of ${\rm M}_{s,n}(\mathbb{F}_2)$.
Indeed we can see that
\begin{align*}
\text{dim}({\mathcal P}^\bot \cap W) &\geq
\text{dim}{\mathcal P}^\bot+\mbox{dim}W-\mbox{dim}{\rm M}_{s,n}(\mathbb{F}_2) \\
&=(sn-m)+(sq+r+1)-sn=1.
\end{align*}
Hence there exists a non-zero matrix $X_{{\mathcal P}} \in W \cap
{\mathcal P}^\bot$. This yields
\[
\delta_{{\mathcal P}^\bot} = \min_{ X \in {\mathcal P}^\bot \backslash \{ O \}} \mu (X) 
\leq  \mu(X_{{\mathcal P}}) \leq \max_{X \in W} \mu(X).
\]
Let us estimate $\max_{X \in W} \mu(X)$ of $W$. 
Let $X_{\text{max}}$ of $W$ be a matrix whose 
entries $x_{i,j}$ in (\ref{W}) are all 1.
The function $\mu$ attains its maximum at 
$X_{\text{max}}$ in $W$. 
Thus it follows that
\[
\max_{X \in W} \mu(X)=\mu(X_{\text{max}})=\dfrac{sq(q+1)}{2}+(q+1)(r+1).
\]

We obtain that
\begin{align*}
\delta_{{\mathcal P}^\bot} = \min_{ X \in {\mathcal P}^\bot \backslash \{ O \}} \mu (X) 
\leq  \mu(X_{{\mathcal P}}) \leq \max_{X \in W} \mu(X)
 = \frac{sq(q+1)}{2}+(q+1)(r+1),
\end{align*}
where ${\mathcal P}$ is an arbitrary $m$-dimensional subspace of ${\rm M}_{s,n}(\mathbb{F}_2)$. 

{\bf (b)}
Let $C'$ be a real number greater than $1/2$ and assume $m/s\geq (\sqrt{C'+1/16}+3/4)/(C'-1/2)$.
By combining $r+1\leq s$, $q \leq m/s$ and the assertion (a), we have
\begin{align*}
\delta_{{\mathcal P}^\bot} \leq \frac{m}{2}\left( \frac{m}{s}+1 \right) +\left( \frac{m}{s}+1\right) \cdot s
=\frac{m^2}{s} \left( \frac{1}{2}+\frac{3 s}{2 m}+\frac{s^2}{m^2}\right) \leq C^{\prime}\dfrac{m^{2}}{s},
\end{align*}
where the last inequality follows from the assumption by completing the square with respect to $s/m$.
\end{proof}

\begin{rmk}\normalfont
This remark is to clarify relations between the above result and existing results.
Fix $\alpha$, and consider the space of $\alpha$-smooth functions. 
For this (and even a larger) function class, Dick \cite[Corollary~5.5 and the comment after its proof]{3} gave
digital nets for which the QMC integration error is bounded from above by the order of $2^{-\alpha m}m^{\alpha s+1}$.
This is optimal, since for any point set of size $2^m$,
Sharygin \cite{11} constructed an $\alpha$-smooth function whose QMC integration error is at least of this order.

Since ${\rm WAFOM}$ gives only an upper bound of the QMC integration error,
our lower bound $2^{-C'm^2/s}$ on ${\rm WAFOM}$ in Theorem~\ref{low} implies nothing on the lower bound of the integration error. 

A merit of ${\rm WAFOM}$ is that the value depends only on the point set, not on the smoothness $\alpha$ such as \cite{3}.
On the other hand, ${\rm WAFOM}$ depends on the degree $n$ of discretization.
Thus, it seems not easy to compare directly the upper bound on the integration error given in \cite{3} and that by ${\rm WAFOM}$.
However, we might consider that our lower bound $2^{-C'm^2/s}$, which is independent of $n$ and $\alpha$,
shows a kind of limitation of the method in bounding the integration error in \cite{3} in the limit $\alpha \to \infty$.
\end{rmk}

\end{document}